\theoremstyle{definition}
\newtheorem{defi}{Definition}[section]
\theoremstyle{remark}
\theoremstyle{plain}
\newtheorem*{fact}{Fact$^*$}
\newtheorem{lem}[defi]{Lemma}
\newtheorem{thm}[defi]{Theorem}
\newtheorem{rem}[defi]{Remark}
\begin{document}

\begin{frontmatter}



\title{\Large Chains without regularity \tnoteref{l1}} 

\author{A. Della Corte\fnref{adc}}
\ead{alessandro.dellacorte@unicam.it}
\fntext[adc]{Mathematics Division, School of Sciences and Technology, University of Camerino (Italy).
Address: via Madonna delle Carceri 9A, Camerino (MC), Italy. ORCID: 0000-0002-1782-0270 (corresponding author).}

\author{M. Farotti\fnref{mf}}
\ead{marco.farotti@studenti.unicam.it.}
\fntext[mf]{Doctoral School in Computer Sciences and Mathematics, University of Camerino (Italy).
Address: via Madonna delle Carceri 9A, Camerino (MC), Italy. ORCID: 0009-0001-5000-2827.}

\begin{abstract}
We study chain-recurrence and chain-transitivity in compact dynamical systems without any regularity assumptions on the map. We prove that every compact system has a chain-recurrent point and a closed, invariant, chain-transitive subsystem. The proofs do not rely on the Axiom of Choice.
\end{abstract}


\begin{keyword}
Dynamical systems \sep Topological dynamics\sep Chain-recurrence\sep Discontinuous dynamics.
\MSC[2020] 37B20\sep 37B65\sep 37B02.

\end{keyword}

\end{frontmatter}



\section{Introduction}\label{_1}
Chain-recurrence is arguably the most important concept in topological dynamics. On one hand, it has strong connections with applications. Any numerical/experimental evaluation of the time-evolution of a system has nonzero round-off/observational errors bounded by some $\epsilon>0$, so what it really provides is an $\epsilon$-chain rather than an orbit; a theoretical understanding of $\epsilon$-chains is thus crucial for linking abstract dynamical results to what can be in fact computed or measured. On the other hand, the concept of chain-recurrence has deep consequences for our theoretical understanding of long-term behaviors of dynamical systems, underpinning in particular C. Conley's decomposition results (\cite{co78}) and E. Akin's characterization of attractors (\cite{kurka03}, p. 82). 

The aim of this work is to show that chain-recurrence and chain-transitivity retain considerable significance even without assuming that the evolution map is continuous. The study of the dynamical properties of maps with a ``substantial"  
discontinuity set (e.g., dense, residual, or of positive measure) is a relatively recent but quite active research area, with connections with substitutions theory (\cite{DCIP,DCF}), topological entropy (\cite{Pawlak1,Pawlak2}), ergodic theory (\cite{CDC,Ciklova}) and functional analysis (\cite{Steele1,Steele2}).
In the present work, we show that key existence theorems involving chain recurrence can be obtained, not merely when relaxing continuity to weakened forms of regularity (such as quasi-continuity, Baire class 1 or a.e. continuity), but also for utterly arbitrary maps. 

Let us recall a well-known fact: consider a dynamical system $(X,f)$, with $X$ a metrizable space. When $X$ is compact and $f$ is continuous, a classical result by Birkhoff (\cite{birkh}) ensures that there is at least one $x\in X$ that revisits each of its neighborhoods within a bounded number of iterations, that is there exists an almost-periodic point. If we drop the continuity assumption, this is obviously false in general. 

A case in which one is forced to consider the dynamics of completely general maps, and in fact one of the motivations for the present work, is that of quantum dynamics in presence of arbitrary collapse events (this direction is developed in \cite{DFG}). 
\\
In an open quantum system, suppose to follow a single branch of the dynamics, that is a map that lets any state evolve (possibly through collapse events) in a way that is physically admissible and compatible with the evolution of every other state (as done for instance in \cite{QCD} and many other subsequent works). At each time unit, either no collapse occurs (so one applies a unitary operator $U$) or a projective collapse $\{P_i\}$ on the $i$-th eigenvector of the chosen observable occurs. If one fixes a particular outcome itinerary $\omega=\{i_0,i_1,i_2,\dots\}$ that is physically admissible along the realized trajectory (in particular, each  outcome $i_k$ has nonzero Born weight for the state undergoing collapse), the resulting evolution defines a  map $$T_\omega:X\to X$$ (where $X$ is the state space) that has, in general, no reason to be continuous anywhere.
Indeed, arbitrarily close states generally yield post–selected states in different eigenspaces, according to the assumed (in von Neumann's postulates of quantum mechanics, \cite{vn__}) unpredictability of the collapsed state in a single event. Consequently, the hypotheses behind classical recurrence results (such as Poincar\'e/Birkhoff recurrence theorems \cite{poinc_, birkh}) are violated on the observed system. To obtain recurrence results, we need to i) relax the recurrence from almost-periodicity to a suitably generalized form of recurrence, and ii) address the problem for completely arbitrary discrete-time quantum evolutions. 
The abstract version of this problem, if the state space $X$ is compact, is precisely our starting point in the present work.

More generally, the results proved herein clarify the respective role of compactness, chain-structure and regularity in the emergence of recurrence phenomena. This can be summarized by saying that the constraints imposed by compactness and chain relations prevail over the potentially unbounded set-theoretic complexity implied by arbitrary maps.

\vspace{0.2cm}

Searching for a suitable candidate generalization of almost-periodicity, let us review the well-known  \emph{topological dynamical relations} $\mathcal{O},\mathcal{R},\mathcal{N},\mathcal{C}\subseteq X^2$, as introduced by E. Akin (\cite{Akin}). 

For $d:X^2\to\mathbb{R}_0^+$ a metric compatible with the topology on $X$, the topological dynamical relations can be defined as follows (see for instance \cite[Def. 2.2, p. 47]{kurka03}):
\begin{defi}\label{toprel}
  \begin{enumerate}
    \text{        }
        \item Orbit relation: 
        
        $x\,\mathcal{O}\,y$ iff $\exists\, k\in\mathbb{N}\,|\,f^k(x)=y$;
        \item Recurrence relation: 
        
        $x\,\mathcal{R}\,y$ iff $\forall \epsilon>0\, \exists\, k\in\mathbb{N}\,|\,f^k(x)\in B_\epsilon(y)$;
        \item Non-wandering relation:
        
        $x\,\mathcal{N}\,y$ iff $\forall \epsilon>0\,\exists\, z\in B_\epsilon(x)\,\exists\,k\in\mathbb{N}\,|\,f^k(z)\in B_\epsilon(y)$;
        \item Chain-recurrence relation: 
        
        $x\,\mathcal{C}\,y \text{ iff }\forall \epsilon>0\,\exists\, n\in\mathbb{N},\,\exists\, x_0,\dots,x_n\,| x_0=x, x_n=y,\,\\  d(f(x_i),x_{i+1})<\epsilon\ \ (i=0,\dots, n-1)$.
    \end{enumerate}
\end{defi}
Notice that all the relations, although defined in metric terms, are independent of the metric. 

By compactness, every infinite orbit has an accumulation point, which is in fact non-wandering, in the sense that it verifies $x\,\mathcal{N}\,x$. Indeed, any open neighborhood of the accumulation point $x$ of an infinite orbit $\mathcal{O}(y)$ contains two points of type $f^k(y)$, $f^h(y)$ for some distinct natural numbers $h,k$, and thus the claim in Def.\ref{toprel}-3 is verified with $y=x$.
Since finite orbits contain periodic (hence non-wandering) points, we have the following, easy
\begin{fact}\label{fact_}
If $X$ is compact, the non-wandering set $\{x\in X|x\,\mathcal{N}\,x \}$ of $(X,f)$ is nonempty, independently of any assumption on $f$.
\end{fact}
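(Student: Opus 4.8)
The plan is to reduce the statement to two complementary cases according to whether the forward orbit $\mathcal{O}(y)=\{y,f(y),f^2(y),\dots\}$ of an arbitrarily chosen $y\in X$ is finite or infinite, and in each case to exhibit a point $x$ with $x\,\mathcal{N}\,x$. I would stress at the outset the one structural observation that makes everything work: the non-wandering condition only asks for a point $z$ \emph{near} $x$ whose iterate returns near $x$, never for $x$ itself to return. Consequently continuity of $f$ never enters the argument, which is precisely why the conclusion survives the removal of every regularity hypothesis.

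If $\mathcal{O}(y)$ is finite, then by the pigeonhole principle there are indices $h<k$ with $f^h(y)=f^k(y)$. Setting $p:=f^h(y)$ and $m:=k-h\ge 1$ gives $f^m(p)=p$, so $p$ is periodic, and periodicity immediately yields $p\,\mathcal{N}\,p$: for every $\epsilon>0$ one takes $z:=p\in B_\epsilon(p)$ together with the exponent $m$, so that $f^m(z)=p\in B_\epsilon(p)$.

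If $\mathcal{O}(y)$ is infinite, I would invoke compactness. Since $X$ is compact and metrizable it is sequentially compact, so the infinite set $\mathcal{O}(y)$ admits an accumulation point $x\in X$; recall that in a metric space every neighborhood of an accumulation point of a set meets that set in infinitely many points. Fixing $\epsilon>0$, the ball $B_\epsilon(x)$ thus contains two \emph{distinct} orbit points $f^h(y)$ and $f^k(y)$, which we relabel so that $h<k$. Putting $z:=f^h(y)\in B_\epsilon(x)$ and $m:=k-h\ge 1$ gives $f^m(z)=f^k(y)\in B_\epsilon(x)$, exactly the defining condition of $x\,\mathcal{N}\,x$ for this $\epsilon$; since $\epsilon$ was arbitrary, $x$ is non-wandering.

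As $X$ is nonempty, at least one of the two cases applies, so the non-wandering set is nonempty. The only points that demand any care are the passage from an accumulation point to two \emph{distinct} orbit iterates inside the same ball (which is what guarantees the return exponent $m=k-h$ is genuinely positive) and the explicit acknowledgement that the argument consumes sequential compactness rather than any continuity of $f$. Beyond this bookkeeping I expect no real obstacle, consistent with the statement being labelled an easy fact.
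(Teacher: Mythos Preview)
Your proof is correct and follows essentially the same approach as the paper: split into the finite-orbit case (yielding a periodic, hence non-wandering, point) and the infinite-orbit case (where compactness furnishes an accumulation point, and any neighborhood of it contains two distinct iterates $f^h(y),f^k(y)$, giving the return). Your write-up is simply a more explicit rendering of the paper's one-paragraph sketch preceding the Fact.
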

However, the significance of this result is problematic if  we cannot guarantee that $f$ is continuous at the non-wandering point $x$.  Specifically, the statement $x\,\mathcal{N}\,x$
 is independent of the value of $f$ at $x$, so it provides insight into the dynamics of $f$ only when the behavior of $f$ at $x$ can be inferred from its values at sufficiently close points — that is, when $f$ is continuous. 

 A consequence of this is that, without regularity assumptions, in general we have $\mathcal{N}\not\subseteq\mathcal{C}$, so that, in particular, it does \emph{not} directly follow from Fact$^*$ that every compact system has a chain-recurrent point. 
Luckily, it is easy to modify slightly the definition of $\mathcal{N}$ introducing a relation $\widetilde{\mathcal{N}}$ such that: 
\begin{itemize}
    \item $\widetilde{\mathcal{N}}=\mathcal{N}$ in case $f$ is continuous (if $X$ has no isolated points);
    \item the points in $\widetilde{\mathcal{N}}$-relation with $x$ depend, in general, on the value of $f$ at $x$;
    \item we have $\widetilde{\mathcal{N}}\subseteq\mathcal{C}$ also without continuity.
\end{itemize} 
If $f$ is continuous, indeed, it does not matter whether we make the $\epsilon$-correction before or after the first iteration of the map, so in the continuous case the relation $\widetilde{\mathcal{N}}$ defined below is just equivalent to $\mathcal{N}$:
\begin{defi}\label{Ntilde}
    For $x,y\in X$, we write $x\,\widetilde{\mathcal{N}}\, y$ if for every $\epsilon > 0$ there exist $k\in\mathbb{N}_0$ and $z\in B_\epsilon(f(x))$ such that $f^k(z)\in B_\epsilon(y)$.
\end{defi}
In case of a discontinuous map, Def.\ref{Ntilde} is not anymore equivalent to Def.\ref{toprel}-3. In fact, the new version fits better with the other topological dynamical relations. In particular, the statement $x\,\mathcal{A}\,x$ \textit{depends} on the behavior of $f$ at $x$ when $\mathcal{A}$ is any of the relations $\mathcal{O}$, $\mathcal{R}$, $\widetilde{\mathcal{N}}$, $\mathcal{C}$, and we have always $\mathcal{O}\subseteq\mathcal{R}\subseteq\widetilde{\mathcal{N}}\subseteq \mathcal{C}$ (this is graphically represented in the scheme of Fig.\ref{fig_1}), which resembles what happens in the continuous case, when $\mathcal{N}=\widetilde{\mathcal{N}}\subseteq \mathcal{C}$. 

\begin{figure}[H]\label{fig_1}
    \centering
    \includegraphics[width=1\linewidth]{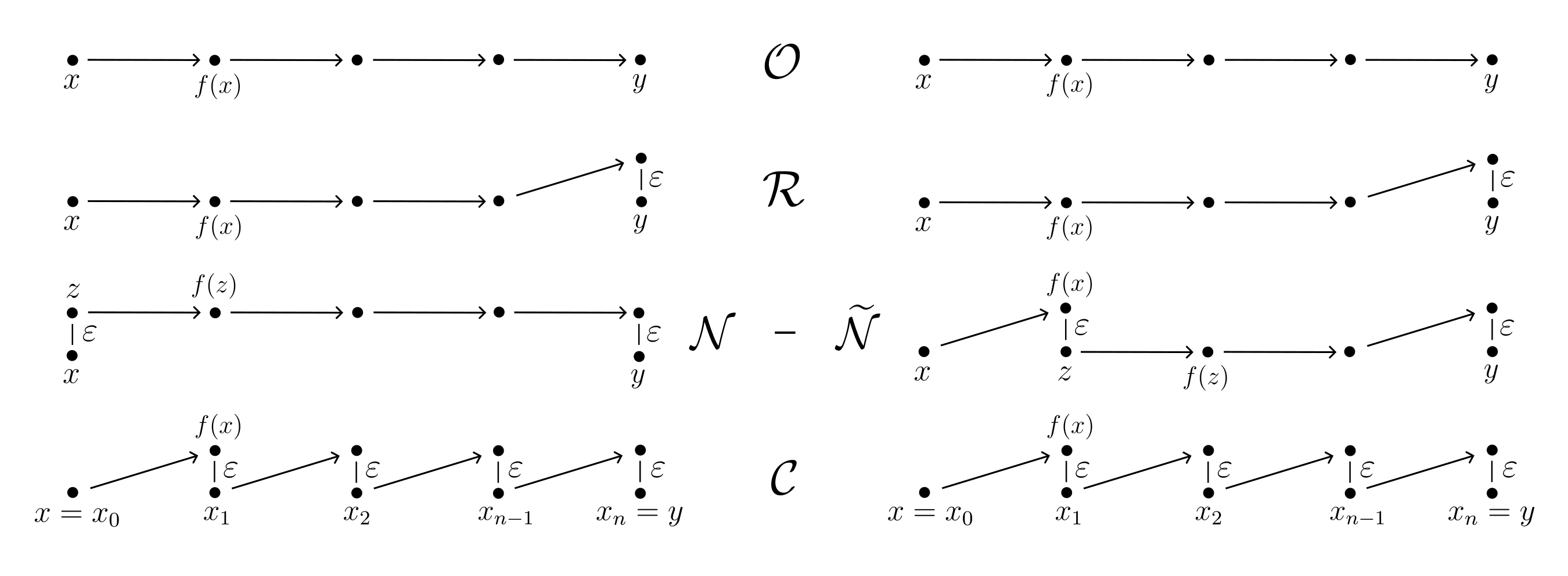}
    \caption{Topological dynamical relations. It is graphically emphasized that, if we replace $\mathcal{N}$ by $\widetilde{\mathcal{N}}$, additional $\epsilon$-corrections are allowed whenever one goes down in the table (right).}
    \label{fig:top_rel}
\end{figure}

Let us now come back to our original problem: the existence of non-wandering points. If we do not assume continuity, it is false that a compact system has always a $\widetilde{\mathcal{N}}$-recurrent point. Indeed, an elementary counterexample to the claim that every compact metrizable dynamical system has a point $x$ such that $x\, \widetilde{\mathcal{N}}\,x$ is given by $([0,1],f)$ where $f$ is defined as: 
\begin{equation}\label{exa_nonr}
f(x)=
\begin{cases}
\vspace{2mm}
\frac 3 4 &\quad \text{ if }x=0\\
\vspace{2mm} 
x(x+\frac 1 2) &\quad \text{ if } x\in (0,\frac 1 2)\\
\vspace{2mm}
\frac 1 4 &\quad \text{ if }x=\frac 1 2\\
\vspace{2mm}
\frac 1 2 x +\frac 1 4 &\quad \text{ if } x\in (\frac 1 2,1]
\end{cases}
\end{equation}
Since all the topological relations $\mathcal{O},\mathcal{R},\widetilde{\mathcal{N}}, \mathcal{C}$ are sensitive to what happens pointwise, one may think that there is little hope of finding any relevant recurrence phenomenon if the map is completely general. This impression is strengthened by the fact that, in the example above, $\widetilde{\mathcal{N}}$ is ruled out by an interval map with just two discontinuity points. If one allows arbitrary maps, the images of $\epsilon$-balls, and in general of open sets, can have arbitrary high (descriptive) set-theoretic complexity, so it seems a priori difficult that any topologically characterized recurrence phenomenon is forced to emerge.

Remarkably, this is not the case: the chain-recurrence relation $\mathcal{C}$ is robust enough, and the compactness hypothesis is sufficiently strict, to ensure key existence results without requiring assumptions on the map. Indeed we will show that: 
\begin{enumerate}
    \item every compact system has a chain-recurrent point; 
    \item every compact system has a chain transitive subsystem;
    \item every point in a compact system is in chain relation with a chain-recurrent point;
    \item every point in a compact system is in chain relation with a chain transitive subsystem.
\end{enumerate} 
The proofs are constructive, in the sense that they do not require the Axiom of Choice (see Remark~\ref{rem_ax_choice}). 

Notice that points 2. and 4. above are respectively stronger than 1. and 3., but their proof builds on their weaker versions, and thus for clarity we preferred to present them as separate, warm-up results.

\section{Settings}

We say that $(X,f)$ is a compact dynamical system (or simply a dynamical system) if $X$ is a compact, metrizable space and $f$ is a self-map of $X$.

For $x\in X$, we indicate by $\mathcal{O}(x)$ the orbit of $x$, that is the set $\mathcal{O}(x)=\{f(x),f^2(x),\ldots\}$. We set $\mathcal{O}^0(x):=\{x\}\cup \mathcal{O}(x)$. 
We will indicate by $\mathbb{N}$ and $\mathbb{N}_0$, respectively, the set of positive and non-negative integers.
Let us recall the notion of $\epsilon$-{\em chain} or $\epsilon$-{\em pseudo-orbit} (\cite[p. 48]{kurka03}).
\begin{defi}
Let $(X,f)$ be a dynamical system and $d:X^2\to\mathbb{R}_0^+$ be a metric compatible with the topology on $X$. Given two points $x,y\in X$ and $\epsilon>0$, an $\epsilon$-\emph{chain} from $x$ to $y$ is a finite set of points $x_0,x_1,\ldots,x_n$ in $X$, with $n\ge 1$, such that
\begin{enumerate}
\item[i)] $x_0=x$ and $x_n=y$,
\item[ii)] $d(f(x_i),x_{i+1})<\epsilon$ for every $i=0,1,\ldots n-1$.
\end{enumerate}
\end{defi}
The {\it chain relation} $\mathcal{C}\subseteq X^2$, the binary relation mentioned in the Introduction, can be equivalently defined as follows (\cite[Def. 2.2, p. 47]{kurka03}): given $x,y\in X$, 
\[
x\,\mathcal{C}\, y\iff \forall \epsilon>0\text{ there exists an $\epsilon$-chain from $x$ to $y$}.
\]
Let $\mathcal{A}\subseteq X^2$ be a binary relation on $X$. For $M\subseteq X$ and $y\in X$, we write $M\,\mathcal{A}\, y$ if $x\,\mathcal{A}\, y$ for every $x\in M$. A point $x\in X$ is called {\it chain-recurrent} if $x\,\mathcal{C}\,x$. 
The set of all the chain-recurrent points of a dynamical system $(X,f)$ is denoted by $CR(X,f)$.
We write simply $CR_f$ when the space $X$ is clear from the context.

Let us also recall the notion of strong chain-recurrence introduced by Easton \cite{easton}.

\begin{defi}
Let $(X,f)$ be a dynamical system and $d:X^2\to\mathbb{R}_0^+$ be a metric compatible with the topology on $X$.
Given two points $x,y\in X$ and $\epsilon>0$, we say that a finite sequence of points $x_0,x_1,\ldots,x_n $ of $X$, with $n\in\mathbb{N}$, is a \emph{strong $(\epsilon,d)$-chain} from
$x$ to $y$ if 
\begin{enumerate}
    \item[i)] $x_0 = x$ and $x_n = y$,
    \item [ii)] $ \sum_{i=0}^{n-1} d(f(x_i),x_{i+1}) <\epsilon$.
\end{enumerate}
The \emph{strong chain relation} $\mathcal{SC}_d\subseteq X^2$ is the binary relation defined as follows: given $x, y \in X$,
\[
x\, \mathcal{SC}_d\, y \iff \forall \epsilon>0\text{ there exists a strong $(\epsilon,d)$-chain from $x$ to $y$}.
\]
A point $x\in X$ is called \emph{strong chain-recurrent} if $x\,\mathcal{SC}_d\,x$. Let $\mathcal{SCR}_d(f)$ be the set of all the strong chain-recurrent points of $(X,f)$. We write simply $\mathcal{SCR}_d$ when the map $f$ is clear from the context.
\end{defi}

The relation $\mathcal{SC}_d$ depends, in general, on the metric $d$ (while $\mathcal{C}$ can be defined in purely topological terms if the space is compact). Intersecting over all topological equivalent metrics on $X$, we obtain the generalized recurrent set $GR(f)$ introduced by Auslander for flows (\cite{Auslander}) and then extended to maps (\cite{Akin,AkinAus,FathiPageault}). More precisely, we give the following
\begin{defi}
    The \emph{generalized recurrent set}, denoted by $GR(f)$, is defined as
    $$
    GR(f)=\bigcap_{d} \mathcal{SCR}_d(f),
    $$
    where the intersection is over all metrics $d$ compatible with the topology of $X$.
\end{defi}
The following inclusions hold without any assumptions on $f$: $$GR(f)\subseteq \mathcal{SCR}_d(f)\subseteq CR_f.$$

\begin{defi}
Let $S$ be a subset of a topological space $X$. The \emph{derived set} of $S$, denoted by $S'$, is the set of the limit points $x$ of $S$, that is, the set of points $x$ such that for every neighborhood $U$ of $x$ we have that 
$S\cap (U\setminus\{x\})\neq \emptyset$.
\end{defi}

\begin{defi}\label{def_inv}
    Let $(X,f)$ be a dynamical system, and $S\subseteq X$. We say that the set $S$ is \emph{$f$-invariant} (or simply invariant) if $f(S)\subseteq S$.
\end{defi}
\begin{defi}\label{def_subsy}
    Let $(X,f)$ be a dynamical system. For $Y\subseteq X$, we say that $(Y,f\vert_Y)$ is a \emph{subsystem} of $(X,f)$ if $Y$ is closed and invariant. 
\end{defi}

\begin{defi}\label{def_chain_trans}
 Let $(X,f)$ be a dynamical system, and $M\subseteq X$. We say that $M$ is \emph{chain-transitive} ($\mathcal{C}$-\emph{transitive}) if for every $x,y\in M$ we have that $x\,\mathcal{C}\, y$.
\end{defi}

\begin{defi}\label{def_strongchain_trans}
    Let $(X,f)$ be a dynamical system and $d:X^2\to\mathbb{R}_0^+$ be a metric compatible with the topology on $X$.
    We say that $M\subseteq X$ is \emph{strong chain-transitive} ($\mathcal{SC}_d$-\emph{transitive}) if for every $x,y\in M$ we have that $x\,\mathcal{SC}_d \, y$.
\end{defi}
The following is a strengthening of chain-transitivity which will prove useful later on.
\begin{defi}\label{def_chain_compl}
  Let $(X,f)$ be a dynamical system, and $M\subseteq X$. We say that $M$ is \emph{internally chain transitive} (see e.g. \cite{Hirsch,Garay,surface}, who use the concept in the case of continuous maps and flows) if for every $x,y\in M$ and for every $\epsilon>0$ there exists an $\epsilon$-chain form $x$ to $y$ whose points belong to $M$. 
\end{defi}
The concept of internal chain transitivity is related to what is called, in case of continuous maps, the \emph{restriction property} for topological dynamical systems (see for instance \cite{Alongi}, Prop. 2.7.21), which ensures that, setting $CR(X,f)=S$, we have $$CR(S,f\vert_S)=S.$$ This means that, for every $x\in S$, the $\epsilon$-chains from $x$ to itself can be built with points belonging to $S$. Notice, however, that this does not imply, even in the continuous case, that two distinct points $x,y\in S$ that are in chain relation can necessarily be connected by $\epsilon$-chains built with points of $S$, as is shown elementarily by $([0,1],f)$ with $f=x^2$. Indeed, here, $1\,\mathcal{C}\, 0$ but the $\epsilon$-chains must of course leave the chain recurrent set $\{0,1\}$.

\section{Results}
\begin{thm}
\label{thm1}
Let $(X,f)$ be a compact dynamical system. Then $CR_f\ne\emptyset$.
\end{thm}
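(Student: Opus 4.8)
The plan is to avoid passing to limits of $\epsilon$-chain-recurrent points, which is delicate precisely because $f$ may be discontinuous at the candidate point (so the first transition of a pseudo-orbit starting there cannot be controlled by its neighbours), and instead to extract a chain-recurrent point from a set that is minimal for the chain relation. For $x\in X$ I would consider the \emph{chain-reachable set} $P(x):=\{y\in X : x\,\mathcal{C}\,y\}$, and prove the statement by producing a point $m$ with $m\in P(m)$.

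First I would record two properties of $P(x)$. It is nonempty, since the one-step chain $x,f(x)$ has $d(f(x),f(x))=0<\epsilon$, so $f(x)\in P(x)$. More importantly, it is closed: if $y_k\to y$ with $y_k\in P(x)$, then given $\epsilon>0$ I pick $k$ with $d(y_k,y)<\epsilon/2$ and an $(\epsilon/2)$-chain from $x$ to $y_k$, and replace its last point $y_k$ by $y$; the final step stays below $\epsilon$ by the triangle inequality, so $y\in P(x)$. I expect this closedness to be the main (indeed the only real) obstacle, and it is exactly where the robustness of $\mathcal{C}$ is used: since $x$ is fixed as the source and only the target $y$ is perturbed, the troublesome first transition $d(f(x),x_1)<\epsilon$ always involves the true value $f(x)$, so no regularity of $f$ is required. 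Perturbing the source instead would reintroduce precisely the discontinuity that defeats $\widetilde{\mathcal{N}}$ in the counterexample of \eqref{exa_nonr}.

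Next I would invoke transitivity of $\mathcal{C}$ (concatenation of pseudo-orbits), which yields the nesting $y\in P(x)\Rightarrow P(y)\subseteq P(x)$. Then I would apply Zorn's lemma to the family $\{P(x):x\in X\}$ ordered by inclusion: for any subfamily totally ordered by inclusion the intersection is nonempty by the finite intersection property (nested nonempty closed subsets of the compact space $X$), and choosing a point $x^{\ast}$ in that intersection gives $P(x^{\ast})\subseteq P(x_\alpha)$ for every member, hence a lower bound lying inside the family. A minimal element $P(m)$ therefore exists. Finally, for any $y\in P(m)$ the nesting gives $P(y)\subseteq P(m)$, so minimality forces $P(y)=P(m)$; since $y\in P(m)=P(y)$ we obtain $y\,\mathcal{C}\,y$, that is $y\in CR_f$, and in particular $CR_f\neq\emptyset$.
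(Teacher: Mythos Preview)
Your proof is correct and follows a genuinely different route from the paper's. The paper constructs an explicit transfinite sequence $(x_\alpha)$---applying $f$ at successor stages, choosing accumulation points of the tail at limit stages---proves by transfinite induction that $x_\beta\,\mathcal{C}\,x_\eta$ whenever $\beta<\eta$, and extracts a chain-recurrent point from the first repetition $x_\alpha=x_\beta$ forced by cardinality. You instead apply Zorn's lemma to the family $\{P(x):x\in X\}$: once $P(x)$ is seen to be nonempty, closed, and nested via transitivity of $\mathcal{C}$, compactness supplies lower bounds for totally ordered subfamilies and minimality does the rest. Your argument is shorter and more conceptual for this statement alone, and it already yields a closed, invariant, $\mathcal{C}$-transitive set $P(m)$ (since $f(y)\in P(y)=P(m)$ and $z\in P(m)=P(y)$ for all $y,z\in P(m)$). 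The paper's construction, however, is designed to be reused: the very same transfinite sequence---which depends only on the topology and on $f$, not on the metric---establishes the analogous statement for every $\mathcal{SC}_d$ simultaneously (hence $GR(f)\ne\emptyset$, Theorem~\ref{th_strongchain}) and produces $\epsilon$-chains lying \emph{inside} the sets $S_{\beta,\alpha}(x)$ (Remark~\ref{rem_nest}), which is the crucial ingredient for the internally chain transitive subsystem results (Lemma~\ref{thm2}, Theorem~\ref{thmsubsyst}) and for the nested-chain relation $\mathcal{C}_\subseteq$. Your minimal set $P(m)$ is not obviously internally chain transitive, so those later refinements would require additional work along your route.
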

\begin{proof}
If there is $x\in X$ such that $|\mathcal{O}(x)|<\aleph_0$, then at least one point $y$ in the orbit of $x$ is periodic, so that $y\,\mathcal{C}\,y$. We can therefore assume that every point in $X$ has an infinite orbit.
Let $d:X^2\to\mathbb{R}_0^+$ be a metric compatible with the topology on $X$.  
\begin{itemize}
\item[\textbf{Step 1}] Pick $x\in X$ and set $x_0:=x$. For $k\in\mathbb{N}$, set $x_k:=f(x_{k-1})$. Pick $y\in \mathcal{O}(x)'$ (which is nonempty by compactness) and set $x_\omega:=y$.
Let us now define a set of points by transfinite induction as follows. 
Let $\alpha>\omega$ be an ordinal number. 
Assume that $x_\beta$ has been defined for every $\beta<\alpha$ and set: $$\{x_\eta\}_{\beta\le\eta<\alpha}:=S_{\beta,\alpha}(x)\quad , \quad S_{\alpha}(x):=S_{0,\alpha}(x).$$
\begin{enumerate}
\item If $\alpha$ is a successor ordinal, set $x_\alpha:=f(x_{\alpha-1})$. 
\item If $\alpha$ is a limit ordinal, then, recalling that every orbit in $X$ has infinite cardinality, we have $|S_{\beta,\alpha}(x)|\ge\aleph_0$ for every $\beta<\alpha$, so $\left(S_{\beta,\alpha}(x)\right)'$ is closed and, by compactness, nonempty. Therefore,
\begin{equation}S^*_\alpha(x):=\bigcap_{\beta<\alpha}\left(S_{\beta,\alpha}(x)\right)'
\end{equation}
is nonempty as well (see for instance \cite[Th. 26.9, p. 169]{munkres2013}). Pick then any $y\in S^*_\alpha(x)$ and set $x_\alpha:=y$.
\end{enumerate}

\item[\textbf{Step 2}] 
For every ordinal $\alpha$, we have 
\begin{equation}
\label{1}
x_\beta \,\mathcal{C}\, x_\eta\quad \text{ whenever }\quad  0\le\beta<\eta\le\alpha.
\end{equation} 
Let us prove this by induction on $\alpha$. 
If $\alpha=1$, the claim is obviously true. Assume then that $\alpha>1$ and that claim \eqref{1} is true for every $\eta<\alpha$. 
If $\alpha$ is a successor ordinal, then $x_\alpha=f(x_{\alpha-1})$, and thus, for $0\le\beta<\eta\le\alpha$, we have 
$$
x_\beta\,\mathcal{C}\,x_{\alpha-1}\implies x_\beta\,\mathcal{C}\,x_{\alpha}.
$$ 
Assume that $\alpha$ is a limit ordinal and take $\beta<\alpha$ and $\epsilon>0$. 
Since $x_\alpha\in S^*_\alpha(x)$, there is $\beta<\eta<\alpha$ such that $d(x_\eta,x_\alpha)<\frac{\epsilon}{2}$. 
By the induction hypothesis, we have $x_\beta\,\mathcal{C}_{\frac{\epsilon}{2}}\,x_\eta$, so by the triangle inequality $x_\beta\,\mathcal{C}_\epsilon\, x_\alpha$, and since $\epsilon$ was arbitrary, we have $x_\beta\,\mathcal{C}\,x_\alpha$.

\item [\textbf{Step 3}]
Now, if for every $\eta<\alpha$ we have $x_\eta\notin S_\eta(x)$, then $x_\beta\ne x_\gamma$ for every distinct $\beta,\gamma<\alpha$, so that the application 
\begin{equation}\label{eq_y}
\eta\mapsto x_\eta\quad\quad (0\le\eta<\alpha)
\end{equation}
is a bijection between $\alpha=\{\eta:0\le\eta<\alpha\}$ and $S_\alpha(x)$, so
we have $|S_\alpha(x)|=|\alpha|$. 
Assuming $x_\alpha\notin S_\alpha(x)$ for every ordinal $\alpha$, by Hartogs' Lemma (\cite{hartogs}) there would be an ordinal $\lambda$ so large that $|S_\lambda(x)|>|X|$, in the sense that there is no injection from $S_\lambda(x)$ into $X$. But this is impossible since $S_\alpha(x)\subseteq X$. 
Therefore, $x_\alpha\in S_\alpha(x)$ for some ordinal $\alpha$, so that: 
\begin{equation}\label{eq_x}
x_\alpha=x_\beta\text{ for some } \beta<\alpha.
\end{equation}
This implies, using claim \eqref{1} with $\eta=\alpha$, that $x_\alpha$ is chain-recurrent. 
\end{itemize}
\end{proof}
\begin{rem}\label{rem_nest}
Notice that, since in Step 2 we built every chain using only points indexed by ordinals, for every $\beta<\eta\le\alpha$ and every $\epsilon>0$, there is an $\epsilon$-chain from $x_\beta$ to $x_\eta$ whose points  belong to $S_{\beta,\eta+1}(x)$.   
\end{rem}
\begin{rem}\label{rem_ax_choice}
The argument in the previous proof does not use the Axiom of Choice. In particular, the selection at limit ordinals is possible in ZF+DC (\cite{Kura_}), while Hartogs' Lemma holds in ZF (\cite{moerdijk}, Chapter 1). The same holds for the other existence results in this work. 

Since a compact space has at most cardinality continuum (see \cite[Corollary 3.1.30]{Engelking}), assuming the Generalized Continuum Hypothesis we can take, in the previous proof, $\lambda\le \aleph_2=2^{\mathfrak{c}}$.
\end{rem}

If we replace the chain relation $\mathcal{C}$ with the relation $\widetilde{\mathcal{N}}$, the argument in the previous proof is no longer applicable. In fact, in Step 2, for $\alpha=1$ claim \eqref{1} is also true by replacing $\mathcal{C}$ with $\widetilde{\mathcal{N}}$, so we can assume as inductive hypothesis that,  for $\alpha>1$, claim \eqref{1} is true, with $\widetilde{\mathcal{N}}$ instead of $\mathcal{C}$, for every $\eta<\alpha$. However, when $\alpha$ is a successor ordinal, even if we have $x_\beta\,\widetilde{\mathcal{N}}\, x_{\alpha-1}$ for $0\le \beta <\eta \le \alpha$, it is not true, in general, that $x_\beta\,\widetilde{\mathcal{N}}\, f(x_{\alpha-1})$ when $f$ is discontinuous at $x_{\alpha-1}$. This of course is consistent with the example given in Eq. \eqref{exa_nonr} in the Introduction. 

The following result is an easy consequence of the previous theorem.
\begin{thm}
\label{cor1}
 Let $(X,f)$ be a compact dynamical system. Then for every $x\in X$ there is $y\in X$ such that $y$ is chain-recurrent and $x\,\mathcal{C}\, y$.
\end{thm}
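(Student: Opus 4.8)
The plan is to harvest the desired chain relation directly from the transfinite construction already carried out in the proof of Theorem~\ref{thm1}. The key observation is that claim~\eqref{1} does more than produce a chain-recurrent point: it asserts $x_\beta\,\mathcal{C}\,x_\eta$ for \emph{all} $0\le\beta<\eta\le\alpha$. Specializing to $\beta=0$ ties the seed $x_0$ to every point generated thereafter, and in particular to the chain-recurrent point obtained when the process closes up. So the whole content is to start the construction at the given point and read off this connection.

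Accordingly, fix $x\in X$ and first dispose of the degenerate case in which $\mathcal{O}(x)$ is finite. Then $x$ is eventually periodic, some iterate $y=f^k(x)$ ($k\ge 0$) is periodic and hence chain-recurrent, and $x\,\mathcal{O}\,y$ together with the inclusion $\mathcal{O}\subseteq\mathcal{C}$ gives $x\,\mathcal{C}\,y$ (if $x$ itself is periodic we take $y=x$). In the remaining case $\mathcal{O}(x)$ is infinite, and I set $x_0:=x$ and run the construction of Theorem~\ref{thm1} verbatim. Two scenarios arise. If every point $x_\eta$ produced has an infinite orbit, then every limit-ordinal step is legitimate (each relevant set $S_{\beta,\eta}(x)$ contains a full forward orbit $\mathcal{O}^0(x_{\gamma_0})$ for some $\gamma_0\in[\beta,\eta)$, hence is infinite and has a limit point), the cardinality argument of Step~3 forces closure at some ordinal $\alpha$ with $x_\alpha$ chain-recurrent, and claim~\eqref{1} with $\beta=0,\ \eta=\alpha$ yields $x=x_0\,\mathcal{C}\,x_\alpha=:y$.

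The only genuinely new scenario is when some generated point has a finite orbit. Let $\gamma\ge 1$ be the first ordinal at which $x_\gamma$ has a finite orbit (here $\gamma\ge 1$ since $\mathcal{O}(x_0)$ is infinite). All earlier points have infinite orbits, so exactly the argument above shows the construction — and therefore the induction establishing claim~\eqref{1} — remains valid up to and including $\gamma$, whether $\gamma$ is a successor or a limit ordinal. Now $x_\gamma$ is eventually periodic, furnishing a periodic, hence chain-recurrent, point $y$ with $x_\gamma\,\mathcal{C}\,y$, while claim~\eqref{1} gives $x=x_0\,\mathcal{C}\,x_\gamma$. Splicing the two $\epsilon$-chains (i.e. using transitivity of $\mathcal{C}$, which holds by concatenation of $\epsilon$-chains) gives $x\,\mathcal{C}\,y$, completing this case.

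I do not anticipate a serious obstacle, in keeping with the statement being billed as an easy consequence: the heart of the matter is simply noticing that the chain-recurrent point delivered by Theorem~\ref{thm1} is, by construction, chain-reachable from the initial seed. The only points requiring a little care are the bookkeeping in the finite-orbit case — verifying that the transfinite construction and claim~\eqref{1} survive up to the first finite-orbit index — and the (routine) transitivity of $\mathcal{C}$ used to join $x\,\mathcal{C}\,x_\gamma$ with $x_\gamma\,\mathcal{C}\,y$.
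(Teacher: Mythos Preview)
Your argument is correct and follows the paper's approach: seed the transfinite construction of Theorem~\ref{thm1} at the given point $x=x_0$ and read off $x_0\,\mathcal{C}\,x_\alpha$ from claim~\eqref{1}. The paper's proof is the terse two-line version of this; your explicit case analysis for points with finite orbit is more careful than the paper, which tacitly relies on the fact that once such a point appears the construction immediately repeats at a successor stage, so Step~3 applies without ever needing the infinite-orbit hypothesis at a later limit stage.
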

\begin{proof}
In the previous argument, and specifically in Eq.\eqref{eq_x}, take $y=x_\alpha$. Since $x=x_0$ was arbitrary, the result follows.
\end{proof}
Notice that, when the map is continuous, the result above can be strengthened by replacing the last relation $x\,\mathcal{C}\,y$ by $x\,\mathcal{R}\,y$, and it is a consequence of the existence of a complete Lyapunov function (a seminal work is \cite{co78}). 

\begin{thm}
\label{th_strongchain}
Let $(X,f)$ be a compact dynamical system. Then $GR(f)\ne\emptyset$.
\end{thm}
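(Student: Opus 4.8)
The plan is to reuse verbatim the transfinite sequence $\{x_\alpha\}$ constructed in the proof of Theorem \ref{thm1}, exploiting the crucial feature that this sequence is defined by purely topological data: successor terms are obtained by applying $f$, and limit terms are chosen in the derived sets $S^*_\alpha(x)$, an operation that makes no reference to any metric. Consequently the very same sequence serves every metric $d$ compatible with the topology at once, and it suffices to show that the terminal repetition $x_\alpha=x_\beta$ furnished by \eqref{eq_x} witnesses strong chain-recurrence for each such $d$ simultaneously.

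First, if some point of $X$ has finite orbit we obtain a periodic point $y$; for every metric $d$ the genuine orbit segment $y,f(y),\ldots,f^{p-1}(y),y$ is a strong $(\epsilon,d)$-chain from $y$ to $y$ of total cost $0<\epsilon$, so $y\in\mathcal{SCR}_d(f)$ for all $d$ and hence $y\in GR(f)$. We may therefore assume, as in Theorem \ref{thm1}, that every orbit is infinite, and keep the sequence $\{x_\alpha\}$ fixed.

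The core step is the strong-chain analogue of claim \eqref{1}: for every metric $d$ compatible with the topology and all $0\le\beta<\eta\le\alpha$, one has $x_\beta\,\mathcal{SC}_d\,x_\eta$. I would prove this by transfinite induction on $\alpha$, mirroring Step 2. In the successor case $x_\alpha=f(x_{\alpha-1})$ exactly, so appending $x_\alpha$ to a strong $(\epsilon,d)$-chain ending at $x_{\alpha-1}$ adds the step $d(f(x_{\alpha-1}),x_\alpha)=0$ and leaves the total cost unchanged. In the limit case, since $x_\alpha\in\left(S_{\beta+1,\alpha}(x)\right)'$ it is a topological limit point of $\{x_\eta:\beta<\eta<\alpha\}$; because $d$ is compatible with the topology, the ball $B_{\epsilon/2}(x_\alpha)$ is a neighborhood of $x_\alpha$, so there is $\eta$ with $\beta<\eta<\alpha$ and $d(x_\eta,x_\alpha)<\tfrac{\epsilon}{2}$. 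Taking, by the inductive hypothesis, a strong $(\tfrac{\epsilon}{2},d)$-chain from $x_\beta$ to $x_\eta$ and replacing its final point $x_\eta$ by $x_\alpha$, the triangle inequality bounds the new total cost by $\tfrac{\epsilon}{2}+d(x_\eta,x_\alpha)<\epsilon$, yielding the desired strong $(\epsilon,d)$-chain from $x_\beta$ to $x_\alpha$.

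Finally, the cardinality argument of Step 3 is unchanged, since it depends only on the size of $X$ and not on $d$, so there is an ordinal $\alpha$ with $x_\alpha=x_\beta$ for some $\beta<\alpha$. Applying the core step with $\eta=\alpha$ gives $x_\beta\,\mathcal{SC}_d\,x_\beta$ for every compatible metric $d$, whence $x_\alpha\in\bigcap_d\mathcal{SCR}_d(f)=GR(f)$ and $GR(f)\ne\emptyset$. The only delicate point is the passage from topological accumulation to metric proximity in the limit case: it is precisely the metric-independence of the derived-set construction that lets a single sequence certify strong chain-recurrence uniformly across all metrics, which is exactly what the intersection defining $GR(f)$ requires.
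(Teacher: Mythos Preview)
Your proposal is correct and follows essentially the same approach as the paper: both exploit that the transfinite sequence $\{x_\alpha\}$ is defined purely topologically, prove the strong-chain analogue of claim \eqref{1} by the same induction (with the identical triangle-inequality replacement of the final point in the limit case), and observe that the cardinality argument of Step 3 is metric-free, so the single repetition $x_\alpha=x_\beta$ lands in $\mathcal{SCR}_d$ for every compatible $d$ and hence in $GR(f)$.
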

\begin{proof}
Again we can assume that every orbit in $X$ is infinite (otherwise the result follows immediately). Let $\mathcal{D}$ be the set of all metrics compatible with the topology on $X$, and pick $d\in\mathcal{D}$.
Pick $x\in X$ and set $x_0:=x$. For $k\in\mathbb{N}$, set $x_k:=f(x_{k-1})$.  
For any ordinal $\alpha$, we define the sets $S_\alpha(x)$ and $S^*_\alpha(x)$ as done in Step 1 of the proof of Theorem \ref{thm1}. Notice that this construction depends only on the topology and on the map $f$, but does not depend on the metric $d$.

Following the steps in the proof of  Theorem \ref{thm1}, we want to prove by induction on $\alpha$ that 
\begin{equation}
\label{sc_1}
x_\beta \,\mathcal{SC}_d\, x_\eta\quad \text{ whenever }\quad  0\le\beta<\eta\le\alpha.
\end{equation} 
If $\alpha=1$, the claim is obviously true; therefore, we assume that claim \eqref{sc_1} is true for every $\eta<\alpha$. 
If $\alpha$ is a successor ordinal, then $x_\alpha=f(x_{\alpha-1})$, and thus, for $0\le\beta<\eta\le\alpha$, we have 
$$
x_\beta\,\mathcal{SC}_d\,x_{\alpha-1}\implies x_\beta\,\mathcal{SC}_d\,x_{\alpha}.
$$ 
Assume that $\alpha$ is a limit ordinal and take $\beta<\alpha$ and $\epsilon>0$. 
Since $x_\alpha\in S^*_\alpha(x)$, there is $\beta<\eta<\alpha$ such that $d(x_\eta,x_\alpha)<\frac{\epsilon}{2}$. 
By the induction hypothesis, there is a strong $(\frac \epsilon 2,d)$-chain $x_\beta=x_0,x_1,\ldots,x_n=x_\eta$ from $x_\beta$ to $x_\eta$.
By the triangle inequality, we have: 
$$
\sum_{i=0}^{n-2}d(f(x_i),x_{i+1})+d(f(x_{n-1}),x_\alpha)
\le
\sum_{i=0}^{n-1}d(f(x_i),x_{i+1})+d(x_\eta,x_\alpha)<
\epsilon.
$$
Therefore, the points $x_\beta=x_0,x_1,\ldots, x_{n-1},x_\alpha$ form a strong $(\epsilon,d)$-chain from $x_\beta$ to $x_\alpha$. 
Since $\epsilon$ was arbitrary, it follows that $x_\beta\,\mathcal{SC}_d\,x_\alpha$. 
Then, by an identical argument as in Step 3 above (with \eqref{sc_1} in place of \eqref{1}), we have that there is some ordinal $\alpha$ such that $x_\alpha\in \mathcal{SCR}_d$. 

Since claim \eqref{sc_1} is valid for every choice of the metric $d$ (compatible with the topology), 
we have 
$$
x_\alpha \in\bigcap_{d\in\mathcal{D}} \mathcal{SCR}_d(f)=GR(f).
$$ 
\end{proof}

We prove now that every compact system has a chain-transitive subsystem. For this we need a preliminary lemma, which deepens our understanding of the construction in the proof of Theorem \ref{thm1}.

\begin{lem}\label{thm2}
    Let $(X,f)$ be a compact dynamical system. Then $X$ has an invariant, $\mathcal{C}$-transitive subset.
\end{lem}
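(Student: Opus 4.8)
The plan is to recycle the transfinite sequence $(x_\gamma)$ built in the proof of Theorem~\ref{thm1}, halting at the first ordinal $\alpha$ at which \eqref{eq_x} occurs, so that $x_\alpha=x_\beta$ for some $\beta<\alpha$. I would then take as candidate the ``block'' of the sequence delimited by these two indices,
\[
M:=\{\,x_\gamma : \beta\le\gamma\le\alpha\,\},
\]
and check separately that $M$ is invariant and that it is $\mathcal{C}$-transitive.

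For invariance, the crucial remark is that $\gamma+1$ is a successor ordinal for \emph{every} $\gamma$, so the successor clause of the construction in Step~1 gives $f(x_\gamma)=x_{\gamma+1}$ for all $\gamma$ without exception — in particular at limit ordinals $\gamma$, where $x_\gamma$ itself enters the sequence as a limit point but its forward image is nonetheless, by definition, the next term $x_{\gamma+1}$. Consequently $f$ carries each $x_\gamma$ with $\beta\le\gamma<\alpha$ to $x_{\gamma+1}$ (index in $(\beta,\alpha]$), and it carries $x_\alpha=x_\beta$ to $x_{\beta+1}$ (index $\beta+1\le\alpha$); in every case the image lands in $M$, whence $f(M)\subseteq M$.

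For $\mathcal{C}$-transitivity I would first note that $\mathcal{C}$ is transitive, since concatenating an $\epsilon$-chain from $x$ to $y$ with one from $y$ to $z$ produces an $\epsilon$-chain from $x$ to $z$. Now take $u=x_\gamma$, $v=x_\delta$ in $M$; after replacing $x_\alpha$ by the equal point $x_\beta$ we may assume $\beta\le\gamma,\delta<\alpha$. If $\gamma<\delta$ then $x_\gamma\,\mathcal{C}\,x_\delta$ directly from \eqref{1}. If instead $\gamma\ge\delta$, I would route through the closing point: \eqref{1} yields $x_\gamma\,\mathcal{C}\,x_\alpha=x_\beta$; when $\delta=\beta$ this is already the desired conclusion, whereas when $\delta>\beta$ a further use of \eqref{1} gives $x_\beta\,\mathcal{C}\,x_\delta$ and transitivity finishes the job. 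The diagonal case $\gamma=\delta$ falls under the same discussion and re-proves, incidentally, that every point of $M$ is chain-recurrent.

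The one place I would tread carefully is the invariance step, because it is tempting to fear that the uncontrolled behavior of $f$ at the limit-ordinal points might push images out of $M$. This fear is unfounded: those points arrive in the sequence as limit points, yet their images are pinned down by the successor step to be the immediately following terms, which by construction already carry an index inside the block. No continuity, and indeed no information about $f$ at the limit points, is ever needed; the sole structural ingredient beyond \eqref{1} is the ``wrap-around'' furnished by the closing relation $x_\alpha=x_\beta$, which is precisely what upgrades the merely forward chains of \eqref{1} into full two-way $\mathcal{C}$-transitivity.
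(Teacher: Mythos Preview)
Your proof is correct and follows essentially the same approach as the paper: you take the same set (your $M$ equals the paper's $S_{\beta,\alpha}(x)$, since $x_\alpha=x_\beta$), and your $\mathcal{C}$-transitivity argument via \eqref{1} plus the wrap-around $x_\alpha=x_\beta$ is identical to the paper's. Your invariance argument is in fact a bit cleaner than the paper's: you use directly that $f(x_\gamma)=x_{\gamma+1}$ for every $\gamma$ (the successor clause of the construction), whereas the paper re-expresses $S_{\beta,\alpha}(x)$ as a union of full orbits indexed by $\beta$ and the intervening limit ordinals, treating the limit and successor cases for $\alpha$ separately; both arguments encode the same observation, but yours avoids the case split.
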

\begin{proof}
If some point is periodic, the periodic orbit is clearly an invariant, $\mathcal{C}$-transitive subset of $X$. 
We can then assume that every orbit is infinite.
We will use the notation defined in the proof of Theorem \ref{thm1}. 
For every $\alpha$ limit ordinal and $\beta\le\alpha$ we set
$$
\Gamma_{\beta,\alpha}:=\{\beta\}\cup \{ \beta< \eta<\alpha : \text{$\eta
$ is a limit ordinal}\}.
$$ 
Pick $x\in X$. 
Let $\lambda$ be an ordinal such that $|\lambda|>|X|$ and $S_\lambda(x)$ a set defined by means of the transfinite procedure described in Step 1 of the proof of Theorem \ref{thm1}. Let $\alpha\le\lambda$ be the least ordinal greater than 0 such that $x_\alpha\in S_\alpha(x)$. 
Then there is an ordinal $0\le \beta<\alpha$ such that $x_\alpha=x_\beta$ and $x_\alpha \in CR_f$.

If $\alpha$ is a limit ordinal, then the set $S_{\beta,\alpha}(x)$ can be written as:
\[
S_{\beta,\alpha}(x)=\bigcup_{\eta\in\Gamma_{\beta,\alpha}} \mathcal{O}^0(x_\eta),
\]
that is, it is a union of orbits and therefore is invariant. Analogously, if $\alpha$ is a successor ordinal, that is $\alpha=\gamma +k$ for some limit ordinal $\gamma\ge \beta$ and $k\in\mathbb{N}$ the set $S_{\beta,\alpha}(x)$ can be written as:
\[
S_{\beta,\alpha}(x)=\bigcup_{\eta\in\Gamma_{\beta,\gamma}} \mathcal{O}^0(x_\eta) \cup 
\bigcup_{h=0}^{k-1} f^h(x_\gamma)
,
\]
which is again invariant, since in this case $f^k(x_\gamma)=x_\alpha=x_\beta$. 
In both cases, since claim \eqref{1} holds, that is, $x_\beta \,\mathcal{C}\,x_\eta$ for every $0\le\beta<\eta\le \alpha$ and $x_\alpha=x_\beta$, by the transitivity of $\mathcal{C}$, it follows that $S_{\beta,\alpha}(x)$ is $\mathcal{C}$-transitive.
\end{proof}

\begin{rem}\label{rem_chain_compl}
Notice that by Remark \ref{rem_nest} and since $x_\alpha=x_\beta$ it follows that for every $z,y\in S_{\beta,\alpha}(x)$ and for every $\epsilon>0$ there exists an $\epsilon$-chain from $z$ to $y$ whose points belong to $S_{\beta,\alpha}(x)$. This is equivalent to say that the set $S_{\beta,\alpha}(x)$ is internally chain transitive.
\end{rem}

\begin{rem}
  Notice that by claim \eqref{sc_1} and since $x_\alpha=x_\beta$, by the transitivity of $\mathcal{SC}_d$, it follows that $S_{\beta,\alpha}(x)$ is $\mathcal{SC}_d$-transitive.
\end{rem}

\begin{thm}\label{thmsubsyst}
    Let $(X,f)$ be a compact dynamical system. Then $(X,f)$ has a $\mathcal{C}$-transitive subsystem.
\end{thm}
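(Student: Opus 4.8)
The plan is to upgrade the invariant, $\mathcal{C}$-transitive set $M=S_{\beta,\alpha}(x)$ produced in Lemma \ref{thm2} to a genuine subsystem, i.e. to also make it closed. The naive attempt is to pass to the closure $\overline{M}$, and this is encouraged by one robust feature of the chain relation that survives the loss of continuity: for fixed $a$, the set $\{q:a\,\mathcal{C}\,q\}$ is closed, since the last arrow of a chain may be rerouted to any sufficiently near target (this is exactly the endpoint-perturbation already used at limit stages in Step 2 of Theorem \ref{thm1}). Because $M$ is $\mathcal{C}$-transitive, for every $a\in M$ we get $M\subseteq\{q:a\,\mathcal{C}\,q\}$, hence $\overline{M}\subseteq\{q:a\,\mathcal{C}\,q\}$; that is, $a\,\mathcal{C}\,q$ for all $a\in M$ and all $q\in\overline{M}$.

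The main obstacle is that this does not close the argument, and in fact $\overline{M}$ need not work at all. Two things go wrong, both caused by discontinuity. First, $\overline{M}$ need not be invariant: if $p\in\overline{M}\setminus M$ then $f(p)$ is uncontrolled and may escape $\overline{M}$. Second, and for the same reason, $\overline{M}$ need not be $\mathcal{C}$-transitive: to start a chain at a limit point $p$ one must use $f(p)$ for the first arrow, and unlike endpoints the \emph{starting} point of a chain cannot be perturbed, so there is no way to route an arbitrary $p\in\overline{M}$ back into $M$. Thus closing up $M$ is the wrong move, and the real difficulty is to reconcile closedness with invariance.

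My plan is therefore to sidestep the closure entirely and extract a minimal subsystem. By Zorn's lemma applied to the nonempty family of nonempty closed invariant subsets of $X$ (ordered by reverse inclusion; the intersection of a nested subfamily is nonempty by compactness and the finite intersection property, closed, and invariant), there is a minimal closed invariant nonempty set $K$. I claim every such $K$ is automatically $\mathcal{C}$-transitive, using only the two continuity-free properties of $\mathcal{C}$: (a) forward-extension, $a\,\mathcal{C}\,b\Rightarrow a\,\mathcal{C}\,f(b)$ (append $f(b)$, whose incoming arrow has length $0$), and (b) the endpoint-closedness recalled above. Fix $u\in K$. Following its orbit gives $u\,\mathcal{C}\,f^k(u)$ for every $k\ge 1$, so $\mathcal{O}(u)\subseteq\{q:u\,\mathcal{C}\,q\}$; moreover $\mathcal{O}(u)\subseteq K$ by invariance, so $\{q:u\,\mathcal{C}\,q\}\cap K$ is nonempty. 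By (a) this set is invariant, by (b) it is closed, and it is contained in $K$; minimality of $K$ forces $\{q:u\,\mathcal{C}\,q\}\cap K=K$, i.e. $u\,\mathcal{C}\,q$ for every $q\in K$. As $u\in K$ was arbitrary, $K$ is $\mathcal{C}$-transitive, and being closed and invariant it is the desired subsystem.

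I expect the only delicate points to be the (routine) Zorn/compactness verification that $K$ exists and the clean isolation of properties (a) and (b). The heart of the matter is recognizing that the chain relation behaves asymmetrically under discontinuity — endpoints and forward iterates are harmless, starting points are not — and that, precisely because a minimal invariant set is closed under $f$, its points never need their \emph{starting} ends perturbed, which is exactly the operation that fails without continuity. I would also note that this route yields $\mathcal{C}$-transitivity but not obviously the stronger internal chain transitivity of Remark \ref{rem_chain_compl}; obtaining the latter for a closed invariant set would again require controlling images of limit points, so it is genuinely a separate question.
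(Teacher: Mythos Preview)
Your Zorn-lemma approach is correct and much shorter than the paper's, which runs a second transfinite induction on top of the machinery of Theorem~\ref{thm1} and Lemma~\ref{thm2}. One point needs attention, though: as the paper's own proof spells out in its opening sentence, a ``$\mathcal{C}$-transitive \emph{subsystem}'' means $(K,f\vert_K)$ is chain-transitive as a dynamical system in its own right, so the connecting $\epsilon$-chains must lie in $K$. This is internal chain transitivity---exactly the property you flag at the end as possibly out of reach.

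In fact your argument already delivers it. Rerun the last paragraph with the internal relation $\mathcal{C}_K$ (chains whose points all lie in $K$) in place of the ambient $\mathcal{C}$. For $u\in K$ the set $A_u:=\{q\in K:u\,\mathcal{C}_K\,q\}$ is nonempty (it contains $\mathcal{O}(u)$, which sits in $K$ by invariance, and the orbit itself is the chain), invariant (appending $f(q)$ keeps the chain in $K$ since $K$ is invariant), and closed in $X$ (endpoint-rerouting replaces the terminal point by a nearby point of $K$, so the rerouted chain is still in $K$; and $A_u\subseteq K$ with $K$ closed). Minimality forces $A_u=K$, so $K$ is internally chain transitive. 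Your worry about ``controlling images of limit points'' is misplaced here: a minimal $K$ is already closed, so no new boundary points are adjoined and the dangerous starting-end perturbation never enters.

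As to what each route buys: the paper's construction tracks an initial point $x^0$ throughout, so Theorem~\ref{thm_cr} falls out for free. But your argument localizes just as easily---apply Zorn inside $\{q:x^0\,\mathcal{C}\,q\}$, which is nonempty (contains $f(x^0)$), closed by your (b), and invariant by your (a), to obtain a minimal $K$ with $x^0\,\mathcal{C}\,K$; the previous paragraph again shows $K$ is internally chain transitive.
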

\begin{proof}
Let us first of all observe that, to prove the statement, we need to show that there is a closed, invariant and internally chain transitive subset of $X$. In particular, the last property ensures that we obtained an object which is a chain-transitive \emph{subsystem} of $(X,f)$, and not just a closed, invariant and chain-transitive subset.
We will use the notation defined in the proof of Theorem \ref{thm1}: for $x \in X$ and for ordinals $\zeta<\iota$, let $S_{\zeta,\iota}(x)$ and $S_{\zeta}(x)=S_{0,\zeta}(x)$ be sets defined as in Step 1 of the proof of Theorem \ref{thm1}.

Suppose, towards a contradiction, that $(X,f)$ does not have a chain-transitive subsystem. 
In particular, we can assume that every orbit is infinite.
Let $x^0\in X$. By Theorem \ref{thm1}, there exists a least ordinal $\alpha_0>0$ such that $x^0_{\alpha_0}\in S_{\alpha_0}(x^0)$, and so $x^0_{\alpha_0}=x^0_{\beta_0}$ for some $\beta_0<\alpha_0$. 
By Lemma \ref{thm2} and Remark \ref{rem_chain_compl}, the set $S_{\beta_0,\alpha_0}(x^0)$ is an invariant, internally chain transitive subset. 
Since $S_{\beta_0,\alpha_0}(x^0)$ cannot be closed, there exists some $y\in (S_{\beta_0,\alpha_0}(x^0))'\setminus S_{\beta_0,\alpha_0}(x^0)$.
By Remark \ref{rem_nest} and by the triangle inequality, we have that for every $x\in S_{\alpha_0}(x^0)$ and for every $\epsilon>0$ there exists an $\epsilon$-chain from $x$ to $y$ whose points belong to $S_{\alpha_0}(x^0)\cup \{y\}$.
If $y\neq x^0$, then we set $x^1:=y$.
Otherwise, if $y=x^0$, it follows that $S_{\alpha_0}(x^0)$ is an invariant, internally chain transitive subset. 
Since it cannot be a closed set, there is some $z\in (S_{\alpha_0}(x^0))'\setminus S_{\alpha_0}(x^0)$, and we set $x^1:=z \,(\neq x^0)$. 
We then have $S_{\alpha_0}(x^0)\, \mathcal{C}\, x^1$, and for every $x\in S_{\alpha_0}(x^0)$ and for every $\epsilon>0$ there exists an $\epsilon$-chain from $x$ to $x^1$ whose points belong to $S_{\alpha_0}(x^0)\cup \{x^1\}$.

Summarizing, we have shown that the set $\{x^0,x^1\}$ has the following properties:
\begin{itemize}
    \item[(a$_1$)] there exists a least ordinal $\alpha_0$ such that $x^0_{\alpha_0}=x^0_{\beta_0}$ for some $\beta_0<\alpha_0$ and  $S_{\beta_0,\alpha_0}(x^0)$ is an invariant, internally chain transitive subset;
    \item[(b$_1$)] $S_{\alpha_0}(x^0)\, \mathcal{C}\, x^1$, and for every $x\in S_{\alpha_0}(x^0)$ and for every $\epsilon>0$ there exists an $\epsilon$-chain from $x$ to $x^1$ whose points belong to $S_{\alpha_0}(x^0)\cup \{x^1\}$;
    \item[(c$_1$)] $x^0 \neq x^1$.
\end{itemize}
Let us proceed now by transfinite induction. 
Let $\lambda>1$ be an ordinal number. 
Assume that $x^\gamma$ has been defined for every $0\le \gamma<\lambda$ and that the set $\{x^\gamma\}_{\gamma<\lambda}$ has the following property:
\begin{enumerate}
\item[(a$_\lambda$)] for every $0\le \gamma<\lambda$, there exists a least ordinal $\alpha_\gamma$ such that $x^\gamma_{\alpha_\gamma}=x^\gamma_{\beta_\gamma}$ for some $\beta_\gamma<\alpha_\gamma$ and  $S_{\beta_\gamma,\alpha_\gamma}(x^\gamma)$ is an invariant, internally chain transitive subset.
\end{enumerate}
Moreover, setting for every $\beta<\alpha\le \lambda$  
\[
M_{\beta,\alpha}:=\bigcup_{\beta\le \gamma<\alpha} S_{\alpha_\gamma}(x^\gamma),
\]
assume also the following properties:
\begin{enumerate}
\item[(b$_\lambda$)] For every $\eta<\xi<\lambda$, we have $S_{\alpha_\eta}(x^\eta)\,\mathcal{C}\, x^{\xi}$, and for every $x\in S_{\alpha_\eta}(x^\eta)$ and every $\epsilon>0$ there exists an $\epsilon$-chain from $x$ to $x^\xi$ whose points belong to $M_{\eta,\xi}\cup \{x^\xi\}$;

\item[(c$_\lambda$)] $x^\eta \neq x^\xi$ whenever $\eta\neq \xi$ with $\eta,\xi<\lambda$ . 
\end{enumerate}

We say that a point $z$ verifies property P$_\lambda$ if, for every $\eta<\lambda$, we have $S_{\alpha_\eta}(x^\eta)\,\mathcal{C}\, z$, and for every $x\in S_{\alpha_\eta}(x^\eta)$ and every $\epsilon>0$, there exists an $\epsilon$-chain from $x$ to $z$ whose points belong to $M_{\eta,\lambda}\cup \{z\}$.

Notice that, assuming (b$_\lambda$), to prove property (b$_{\lambda+1}$) it is sufficient to show that the point $x^\lambda$ verifies property P$_\lambda$.
We now proceed with the definition of the point $x^\lambda$.

\begin{itemize}
    \item[Case 1.] $\lambda$ is a successor ordinal.\\
    Consider the set $S_{\beta_{\lambda-1},\alpha_{\lambda-1}}(x^{\lambda-1})$, which is invariant and internally chain transitive by property (a$_\lambda$). Since it cannot be a closed set, there exists some 
    $$
    y\in (S_{\beta_{\lambda-1},\alpha_{\lambda-1}}(x^{\lambda-1}))'\setminus S_{\beta_{\lambda-1},\alpha_{\lambda-1}}(x^{\lambda-1}).
    $$
    The point $y$ verifies property P$_\lambda$.
    Indeed, by Remark \ref{rem_nest}, and by the triangle inequality, we have that for every $x\in S_{\alpha_{\lambda-1}}(x^{\lambda-1})$ and for every $\epsilon>0$ there exists an $\epsilon$-chain from $x$ to $y$ whose points belong to $S_{\alpha_{\lambda-1}}(x^{\lambda-1})\cup \{y\}$. Fix $\eta<\lambda$ and $\epsilon>0$ and take $x\in S_{\alpha_\eta}(x^\eta)$. 
    Then there is an $\epsilon$-chain from $x^{\lambda-1}\in S_{\alpha_{\lambda-1}}(x^{\lambda-1})$ to $y$ whose points belong to $S_{\alpha_{\lambda-1}}(x^{\lambda-1})\cup \{y\}$. 
    By property (b$_\lambda$), for every $\epsilon>0$ there exists an $\epsilon$-chain from $x$ to $x^{\lambda-1}$ whose points belong to $M_{\eta,\lambda-1}\cup \{x^{\lambda-1}\}$.
    Therefore, noting that $M_{\eta,\lambda}=M_{\eta,\lambda-1}\cup S_{\alpha_{\lambda-1}}(x^{\lambda-1})$ and using the transitivity of $\mathcal{C}$, we have that there is an $\epsilon$-chain from $x$ to $y$ whose points belong to $M_{\eta,\lambda}\cup \{y\}$.\\
    Consider the following two cases.
    \begin{itemize}
        \item If $y\notin \{x^\gamma\}_{\gamma<\lambda}$, then we set $x^\lambda:=y$. Then $x^\lambda$ verifies property P$_\lambda$ and (b$_{\lambda+1}$) is satisfied.
        
        \item Assume that $y=x^{\xi_0}$ for some $\xi_0<\lambda$.
        By property (b$_\lambda$) and since $y$ verifies property P$_\lambda$, we have that the set $M_{\xi_0,\lambda}$ is invariant and internally chain transitive. 
        Since this set cannot be closed, there is some $z_1\in (M_{\xi_0,\lambda})'\setminus M_{\xi_0,\lambda}$, so that $z_1\notin \{x^\gamma\}_{\xi_0\le \gamma<\lambda}$. 
        Since $M_{\xi_0,\lambda}$ is internally chain transitive and $z_1\in (M_{\xi_0,\lambda})'$, by the triangle inequality it follows that for every $x\in M_{\xi_0,\lambda}$ there exists an $\epsilon$-chain from $x$ to $z_1$ whose points belong to $M_{\xi_0,\lambda}\cup \{z_1\}$.
        By property (b$_\lambda$) and using the transitivity of $\mathcal{C}$, we have that
        the point $z_1$ verifies property P$_\lambda$.
        
        If $z_1=x^{\xi_1}$ for some $\xi_1<\xi_0$, we repeat the same argument replacing $\xi_1$ with $\xi_0$ and observing that the set $M_{\xi_1,\lambda}$ is invariant and internally chain transitive.

        Since there is no infinitely decreasing sequence of ordinals, we can repeat the previous construction up to a certain  $k\in\mathbb{N}$, after which we must have $z_{k+1}\notin \{x^\gamma\}_{\gamma<\lambda}$.
        Then we set $x^\lambda:=z_{k+1}$ and since it verifies property P$_\lambda$,  property (b$_{\lambda+1}$) is satisfied.
    \end{itemize}
    Consider the sequence $\{x^\gamma\}_{\gamma\le \lambda}$. By construction, property (c$_{\lambda+1}$) is verified, and from Lemma \ref{thm2} and Remark \ref{rem_chain_compl} property (a$_{\lambda+1})$ follows.
    
    \item[Case 2.] $\lambda$ is a limit ordinal.\\
    Since $|\{x^\xi \ |\ \gamma\le \xi <\lambda\}|\ge \aleph_0$ for every $\gamma<\lambda$, by compactness (see \cite[Th. 26.9, p. 169]{munkres2013}) we can pick 
    $$
    y\in \bigcap_{\gamma<\lambda}(\{x^\xi \ |\ \gamma\le \xi <\lambda\})'.
    $$
    The point $y$ verifies property P$_\lambda$.
    Indeed, fix $\eta<\lambda$ and $\epsilon>0$ and take $x\in S_{\alpha_\eta}(x^\eta)$. Then there exists $\eta<\xi<\lambda$ such that $d(x^\xi,y)<\frac \epsilon 2$.
    By property (b$_\lambda$), there exists an $\frac \epsilon 2$-chain from $x$ to $x^\xi$ whose points belong to $M_{\eta,\xi}\cup \{x^\xi\}$. By the triangle inequality, it follows that there exists an $\epsilon$-chain from $x$ to $y$ whose points belong to $M_{\eta,\xi}\cup \{y\}\subseteq M_{\eta,\lambda}\cup \{y\}$.\\
    Consider the following two cases.
    \begin{itemize}
        \item If $y\notin \{x^\gamma\}_{\gamma<\lambda}$, then we set $x^\lambda:=y$. 
        Then, $x^\lambda$ verifies property P$_\lambda$ and (b$_{\lambda+1}$) is satisfied.
        
        \item Assume that $y=x^{\xi_0}$ for some $\xi_0<\lambda$. 
        By property (b$_\lambda$) and since $y$ verifies property P$_\lambda$ we have that the set $M_{\xi_0,\lambda}$ is invariant and internally chain transitive. 
        Since this set cannot be closed, there exists $z_1$ such that 
        $z_1 \in \left(M_{\xi_0,\lambda}\right)'\setminus M_{\xi_0,\lambda}.$
        Therefore, we have that $z_1\notin \{x^\gamma\}_{\xi_0 \le \gamma<\lambda}$. 
        Since $M_{\xi_0,\lambda}$ is internally chain transitive and $z\in (M_{\xi_0,\lambda})'$, by property (b$_\lambda$) and by the triangle inequality it follows that for every $x\in M_{\xi_0,\lambda}$ there exists an $\epsilon$-chain from $x$ to $z_1$ whose points belong to $M_{\xi_0,\lambda}\cup \{z_1\}$.
        By property (b$_\lambda$) and using the transitivity of $\mathcal{C}$, we have that
        the point $z_1$ verifies property P$_\lambda$.
        
        If $z_1=x^{\xi_1}$ for some $\xi_1<\xi_0$, we repeat the same argument replacing $\xi_1$ with $\xi_0$ and observing that the set $M_{\xi_1,\lambda}$ is invariant and internally chain transitive.

        Since there is no infinitely decreasing sequence of ordinals, we can repeat the previous construction up to a certain  $k\in\mathbb{N}$, after which we must have $z_{k+1}\notin \{x^\gamma\}_{\gamma<\lambda}$.
        Then we set $x^\lambda:=z_{k+1}$ and since it verifies property P$_\lambda$,  property (b$_{\lambda+1}$) is satisfied. 
    \end{itemize}
By construction, the sequence $\{x^\gamma\}_{\gamma\le \lambda}$ verifies property (c$_{\lambda+1}$), and from Lemma \ref{thm2} and Remark \ref{rem_chain_compl} property (a$_{\lambda+1})$ follows.
\end{itemize}
Assuming that $(X,f)$ does not have a chain-transitive subsystem, we find that the application
$$
\gamma\mapsto x^\gamma \quad (0\le\gamma<\lambda)
$$
is a bijection between $\lambda$ and the set $\{x^\gamma\}_{0\le\gamma<\lambda}$, so we have that $|\{x^\gamma\}_{0\le\gamma<\lambda}|=|\lambda|$. 
By Hartogs' Lemma (\cite{hartogs}), we can take $\lambda$ so large that $|\{x^\gamma\}_{0\le \gamma<\lambda}|>|X|$, which is a contradiction. 

Therefore, there exists an ordinal $\nu<\lambda$ such that it is impossible to define the point $x^\nu$.
More precisely, we have that, if $\nu$ is a successor ordinal, one cannot define the point $x^\nu$ if one of the following verifies:
\begin{itemize}
    \item The set $S_{\beta_{\nu-1},\alpha_{\nu-1}}(x^{\nu-1})$ is closed, so that
    $$(S_{\beta_{\nu-1},\alpha_{\nu-1}}(x^{\nu-1}))'\subseteq S_{\beta_{\nu-1},\alpha_{\nu-1}}(x^{\nu-1}).$$
    
    \item The set $S_{\beta_{\nu-1},\alpha_{\nu-1}}(x^{\nu-1})$ is not closed and there exists an ordinal $\xi<\nu$ such that the set $M_{\xi,\nu}$ is a closed, invariant and internally chain transitive subset, that is a chain-transitive subsystem.
    \end{itemize}

On the other hand, if $\nu$ is a limit ordinal, then we cannot define the point $x^\nu$ if for every $y\in\bigcap_{\gamma<\nu}(\{x^\xi \ |\ \gamma\le \xi <\nu\})'$, we have that $y\in\{x^\gamma\}_{\gamma<\nu}$. 
This implies that there exists an ordinal $\xi<\nu$ such that $M_{\xi,\nu}$ is a closed, invariant and internally chain transitive subset, that is a chain-transitive subsystem. 
\end{proof}

\begin{figure}[H]\label{fig_2}
    \centering
    \includegraphics[width=1\linewidth]{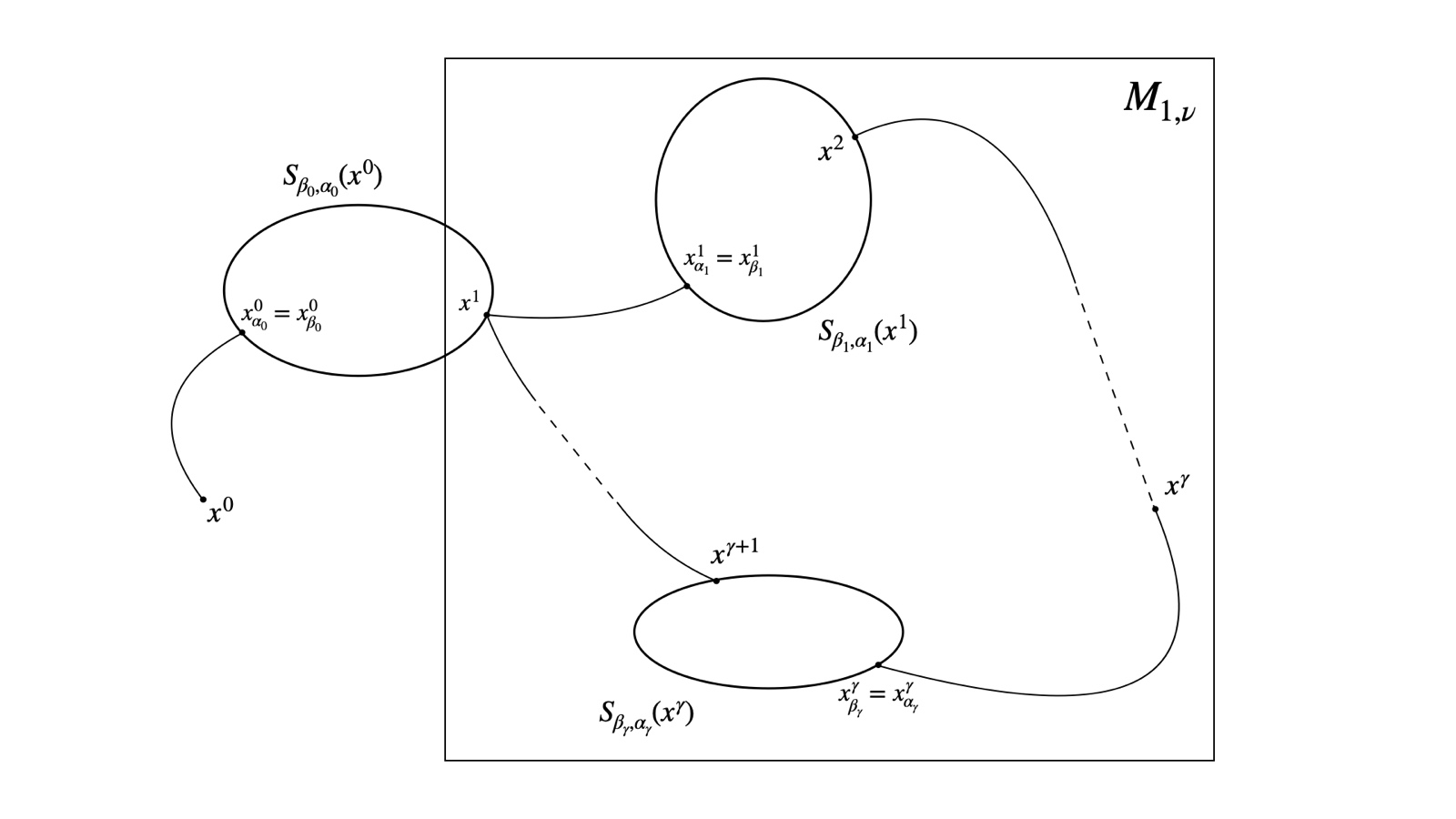}
    \caption{Graphic representation of (a sample case of) the reasoning in the proof of Theorem \ref{thmsubsyst}.}
    \label{fig:subsystem}
\end{figure}
Notice that, assuming $f$ continuous, we can strengthen the previous result  replacing ``chain-transitive" by ``minimal" (see \cite{kurka03}, Proposition 2.20). 

From the previous theorem we can readily deduce the following result.

\begin{thm}
\label{thm_cr}
Let $X$ be a compact dynamical system.
Then every $x\in X$ is in chain-recurrence relation with every point of certain closed, invariant, internally chain transitive subset $S\subseteq X$.
\end{thm}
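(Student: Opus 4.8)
The plan is to re-run the transfinite construction from the proof of Theorem~\ref{thmsubsyst} using the given point as the seed, i.e.\ to set $x^0:=x$, and then to extract the conclusion from the bookkeeping properties (a$_\lambda$)--(c$_\lambda$) carried along that construction. Since the whole construction depends on the choice of $x^0$, the resulting subsystem depends on $x$, which matches the phrasing ``certain \dots\ subset $S$''. Concretely, running the argument of Theorem~\ref{thmsubsyst} produces an ordinal $\nu$ and an ordinal $\xi_*<\nu$ such that $S:=M_{\xi_*,\nu}=\bigcup_{\xi_*\le\gamma<\nu}S_{\alpha_\gamma}(x^\gamma)$ is a closed, invariant, internally chain transitive subset, hence a $\mathcal{C}$-transitive subsystem. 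It then remains only to verify that $x\,\mathcal{C}\,s$ for every $s\in S$.

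First I would connect $x$ to a single distinguished point of $S$. The point $x^{\xi_*}=x^{\xi_*}_0$ lies in $S_{\alpha_{\xi_*}}(x^{\xi_*})\subseteq S$. If $\xi_*>0$, the chain property (b$_\lambda$) of the construction, applied with $\eta=0$ and $\xi=\xi_*$, gives $S_{\alpha_0}(x^0)\,\mathcal{C}\,x^{\xi_*}$; since $x=x^0=x^0_0\in S_{\alpha_0}(x^0)$, this yields $x\,\mathcal{C}\,x^{\xi_*}$. If instead $\xi_*=0$, then $x^{\xi_*}=x$ already belongs to $S$ and there is nothing to connect. Either way we obtain a point $s_0:=x^{\xi_*}\in S$ with $x\,\mathcal{C}\,s_0$.

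Then I would propagate this to all of $S$ using internal chain transitivity. Since $S$ is internally chain transitive, for every $s\in S$ we have $s_0\,\mathcal{C}\,s$; combining with $x\,\mathcal{C}\,s_0$ and the transitivity of $\mathcal{C}$ gives $x\,\mathcal{C}\,s$ for every $s\in S$, which is the assertion. Alternatively, one can connect $x$ to an arbitrary $s=x^\gamma_\delta\in S_{\alpha_\gamma}(x^\gamma)$ directly: property (b$_\lambda$) gives $x\,\mathcal{C}\,x^\gamma$, Remark~\ref{rem_nest} gives $x^\gamma=x^\gamma_0\,\mathcal{C}\,x^\gamma_\delta$ (trivially when $\delta=0$), and transitivity of $\mathcal{C}$ finishes.

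The only genuine subtlety — and the step I expect to require the most care — is that $x$ itself need not belong to the subsystem $S$ when $\xi_*>0$; consequently one cannot invoke the internal chain transitivity of $S$ for $x$ directly, but must first ``enter'' $S$ through the nested-chain property (b$_\lambda$) and only afterwards move freely inside $S$. Once this entry step is isolated, the remainder is a routine two-step application of the transitivity of $\mathcal{C}$, together with the edge-case check $\xi_*=0$ (in which $x\in S$ and the claim is immediate from internal chain transitivity).
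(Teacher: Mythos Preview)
Your approach is essentially the paper's own: seed the transfinite construction of Theorem~\ref{thmsubsyst} at the given point $x^0:=x$, let the procedure terminate at the ordinal $\nu$, read off the resulting closed, invariant, internally chain transitive set $S$, and then argue ``by construction'' (i.e.\ via property~(b$_\lambda$) plus transitivity of $\mathcal{C}$) that $x^0\,\mathcal{C}\,s$ for every $s\in S$. The one small inaccuracy is your blanket assertion that $S$ always has the form $M_{\xi_*,\nu}$: in the successor subcase where $S_{\beta_{\nu-1},\alpha_{\nu-1}}(x^{\nu-1})$ happens to be closed, that smaller set (not any $M_{\xi_*,\nu}$) is the subsystem produced by Theorem~\ref{thmsubsyst}; the paper's proof tracks this case separately, and your ``alternative'' argument via Remark~\ref{rem_nest} already covers it, so the omission is cosmetic rather than a real gap.
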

\begin{proof}
Pick any $x^0\in X$. By Theorem \ref{thmsubsyst} there exists a least ordinal $\nu$ such that the set $\{x^\gamma\}_{\gamma<\nu}$ is defined, while it is impossible to define the point $x^\nu$. 

If $\nu$ is a successor ordinal, then one of the following verifies:
\begin{enumerate}
    \item The set $S_{\beta_{\nu-1},\alpha_{\nu-1}}(x^{\nu-1})$ is a closed, invariant and internally chain transitive subset. \\
    By construction of the set $\{x^\gamma\}_{\gamma<\nu}$ we have $x^0\,\mathcal{C}\, S_{\beta_{\nu-1},\alpha_{\nu-1}}(x^{\nu-1})$ and the claim is verified with $x=x^0$ and $S=S_{\beta_{\nu-1},\alpha_{\nu-1}}(x^{\nu-1})$.
    
    \item The set $S_{\beta_{\nu-1},\alpha_{\nu-1}}(x^{\nu-1})$ is not closed and there exists an ordinal $\xi<\nu$ such that the set $M_{\xi,\nu}$ is closed, invariant and internally chain transitive. Again, by construction, we have $x^0\,\mathcal{C}\,M_{\xi,\nu}$ and the claim is verified with $x=x^0$ and $S=M_{\xi,\nu}$.
    
    \end{enumerate}
On the other hand, if $\nu$ is a limit ordinal and we cannot define the point $x^\nu$, then there exists an ordinal $\xi<\nu$ such that $M_{\xi,\nu}$ is a closed, invariant and internally chain transitive subset. 
Moreover, we have $x^0\,\mathcal{C}\,M_{\xi,\nu}$ and the claim is verified with $x=x^0$ and $S=M_{\xi,\nu}$.

Since $x^0$ was arbitrary, the result follows.
\end{proof}
Similarly to what we observed after Theorem \ref{cor1}, also here, if the map is continuous, one has a stronger result, as we can replace, in the previous statement, ``chain-recurrence relation" by ``recurrence relation" (see for instance, Lemma 2.1 in \cite{Hirsch}).

\vspace{15mm}
In the table below we summarize our main results (right) and their continuous versions (left).

\begin{table}[H]
\renewcommand{\arraystretch}{1.8} 
\begin{tabularx}{\textwidth}{|>{\Centering}X|>{\Centering}X|}
\hline
Assuming $f$ continuous & No assumptions on $f$ \\
\hline
There exists an almost periodic point & There exists a $\mathcal{C}$-recurrent point \newline (in fact also belonging to $GR(f)$) \\
Every point is in recurrence relation with a $\mathcal{C}$-recurrent point & Every point is in $\mathcal{C}$-recurrence relation with a $\mathcal{C}$-recurrent point \\
There exists a minimal subsystem & There exists a $\mathcal{C}$-transitive subsystem \\
Every point is in recurrence relation with a $\mathcal{C}$-transitive subsystem & Every point is in $\mathcal{C}$-recurrence relation with a $\mathcal{C}$-transitive subsystem \\
\hline
\end{tabularx}
\caption{Comparison of existence results in compact systems with continuity and with no assumptions on the map.}
\end{table}




\pagebreak

\end{document}